\numberwithin{equation}{section}
\newtheorem{prop}{Proposition}
\newtheorem{theorem}[prop]{Theorem}
\newtheorem{lemma}[prop]{Lemma}
\theoremstyle{definition}
\newtheorem{example}[prop]{Example}
\numberwithin{prop}{section}
\newcommand{\El}{\mbox{\boldmath $L$}}
\newcommand{\Els}{\mbox{\scriptsize\boldmath $L$}}
\newcommand{\geh}{\mathfrak{g}}
\newcommand{\gnode}{
	\color[cmyk]{0,0,0,0.3}
	\put(-0.3,0){\rule{9.8pt}{9.8pt}}
	\color{black}}
\newcommand{\Jt}{\tilde{J}}
\newcommand{\la}{\lambda}
\newcommand{\La}{\Lambda}
\newcommand{\Lab}{\overline{\Lambda}}
\newcommand{\maru}[1]{\xymatrix@1{*+[o][F-]{#1}}}
\newcommand{\nut}{\tilde{\nu}}
\newcommand{\ol}{\overline}
\newcommand{\ot}{\otimes}
\newcommand{\Path}{\mathcal{P}}
\newcommand{\qbin}[2]{\genfrac{[}{]}{0pt}{}{#1}{#2}}
\newcommand{\RC}{\mathrm{RC}}
\newcommand{\veps}{\varepsilon}
\newcommand{\vphi}{\varphi}
\newcommand{\wt}{\mathrm{wt}}
\newcommand{\Z}{\mathbb{Z}}
\begin{document}

\title{KKR type bijection for the exceptional affine algebra $E_6^{(1)}$}

\author[M.~Okado]{Masato Okado}
\address{Department of Mathematical Science,
Graduate School of Engineering Science, Osaka University,
Toyonaka, Osaka 560-8531, Japan}

\author[N.~Sano]{Nobumasa Sano}

\date{May 20, 2011}
\footnote[0]{2010 {\it Mathematics Subject Classification.} 
Primary 17B37 82B23 05A19; Secondary 17B25 81R50 81R10 05E10 11B65.}

\begin{abstract}
For the exceptional affine type $E_6^{(1)}$ we establish a statistic-preserving bijection
between the highest weight paths consisting of the simplest Kirillov-Reshetikhin crystal
and the rigged configurations. The algorithm only uses the structure of the crystal graph,
hence could also be applied to other exceptional types.
\end{abstract}

\maketitle


\section{Introduction}

In a pioneering work \cite{KKR} Kerov, Kirillov and Reshetikhin introduced a new combinatorial
object, called rigged configuration, through Bethe ansatz analysis of the Heisenberg spin chain,
and constructed a bijection between rigged configurations and semistandard tableaux. One of the
amazing properties of the rigged configuration is that it possesses a natural statistic
and the statistic coincides with the charge by Lascoux and Sch\"utzenberger \cite{LS} on the tableau
under the bijection. Subsequently, Nakayashiki and Yamada \cite{NY} studied the meaning of the
charge in terms of Kashiwara's crystal bases. They considered the crystal base $B_l$ of the 
$l$-fold symmetric tensor representation of the $n$-dimensional irreducible 
$U_q(\widehat{\mathfrak{sl}}_n)$-module. For the tensor product $B_l\ot B_{l'}$ an integer-valued 
function $H$, called energy function, is defined via the $q\to0$ limit of the quantum $R$-matrix.
Using this $H$ they constructed a function $D$ on the multiple tensor product 
$B_{l_1}\ot\cdots\ot B_{l_m}$. They then showed that under a certain bijection sending highest 
weight vectors or paths of $B_{l_1}\ot\cdots\ot B_{l_m}$ to semistandard tableaux, the value of
$D$ agrees with the charge, thereby proving that the well-known Kostka polynomial is represented as
a generating function of highest weight paths with statistic $D$. This generating function is 
denoted by $X$ and the one of rigged configurations by $M$. The equality $X=M$ 
was extended to the most general case for affine type $A$ in \cite{KSS}. See also \cite{Srev} for review.

It did not take long before this kind of equality was conjectured to exist for other affine types.
For the $X$ side, crystal bases for some finite-dimensional modules, which are now called 
Kirillov-Reshetikhin (KR) modules, for quantum affine algebras have been discovered in \cite{KMN2}.
For the $M$ side, the existence of KR modules were conjectured and a formula to count the number
of rigged configurations were presented in \cite{KR}. Introducing an appropriate $q$-analogue for
the formula, the $X=M$ conjecture \cite{HKOTY,HKOTT} was presented. Imitating the one by KKR
a bijection between rigged configurations and highest weight paths consisting of elements of 
KR crystals for other nonexceptional affine types was subsequently constructed in \cite{OSS,S,SS}.
We note that these bijections have an important application for the analysis of the ultra-discrete
integrable systems, also called box-ball systems \cite{FOY,HHIKTT,HKOTY2}. In such systems 
rigged configurations give the complete set of the action and angle variables \cite{KOSTY,KSY}.

In this paper we consider the exceptional affine algebra of type $E_6^{(1)}$. The KR crystal we
deal with is the simplest one denoted in our notation by $B^{1,1}$, whose crystal structure was 
revealed in \cite{NS,JS}. We construct a map $\Phi$ from rigged configurations to highest weight 
elements of $(B^{1,1})^{\ot L}$ by executing a fundamental procedure $\delta$ repeatedly. We then
show $\Phi$ is a statistic-preserving bijection (Theorem \ref{thm:bij}). It is worth mentioning
that our procedure only uses the crystal graph structure of the KR crystal $B^{1,1}$, hence similar
constructions could be possible for other exceptional types.

We remark that recently Naoi \cite{N} solved, with the help of the results in \cite{DK} and \cite{NS2},
the $X=M$ conjecture for all untwisted affine types when the tensor product of KR crystals is of
the form $B^{r_1,1}\ot\cdots\ot B^{r_l,1}$ by showing both $X$ and $M$ are equal to the graded 
character of a Weyl module, a finite-dimensional current algebra representation defined in \cite{CL}.
Hence his result includes ours as a special case. However, we think our direct method is also important, 
since it could also be used for more general cases by cutting larger KR crystals as in \cite{KSS}.

\section{Quantum affine algebra and crystal}

\subsection{Affine algebra $E_6^{(1)}$} \label{subsec:algebra}
We consider in this paper the exceptional affine algebra $E_6^{(1)}$. 
The Dynkin diagram is depicted in Figure \ref{fig:Dynkin}. Note that we follow \cite{Kac} for 
the labeling of the Dynkin nodes. It is different from that in \cite{Bour} or \cite{JS}.
Let $I$ be the index set of the Dynkin nodes, and let $\alpha_i,\alpha_i^\vee,\La_i\,(i\in I)$ be 
simple roots, simple coroots, fundamental weights, respectively. Following the notation in \cite{Kac}
we denote the projection of $\La_i$ onto the weight space of $E_6$ by $\Lab_i$ ($i\in I_0$) and set
$\ol{P}=\bigoplus_{i\in I_0} \Z\Lab_i,\ol{P}^+=\bigoplus_{i\in I_0} \Z_{\ge0}\Lab_i$. Let 
$(C_{ij})_{i,j\in I}$ stand for the Cartan matrix for $E_6^{(1)}$. For $i,j\in I$, $i\sim j$ means
$C_{ij}=-1$, namely, the nodes $i$ and $j$ are adjacent in the Dynkin diagram of $E_6^{(1)}$.
{\unitlength=.95pt
\begin{figure}
\[
\xymatrix{
&& *{\circ}<3pt> \ar@{-}[d]^<{0} \\
&& *{\circ}<3pt> \ar@{-}[d]^<{6} \\
*{\circ}<3pt> \ar@{-}[r]_<{1} & *{\circ}<3pt> \ar@{-}[r]_<{2} & *{\circ}<3pt> \ar@{-}[r]_<{3} & 
*{\circ}<3pt> \ar@{-}[r]_<{4}_(1.11){5} & *{\circ}<3pt>
}
\]
\caption{Dynkin diagram for $E_6^{(1)}$}
\label{fig:Dynkin}
\end{figure}}

\subsection{KR crystal}
Let $\geh$ be any affine algebra and $U'_q(\geh)$ the corresponding quantized enveloping algebra 
without the degree operator. Among finite-dimensional $U'_q(\geh)$-modules there is a distinguished
family called Kirillov-Reshetikhin (KR) modules \cite{KNT,Nak,Her}. One of the remarkable properties 
of KR modules is the existence of a crystal basis \cite{Ka} called a KR crystal. It was conjectured 
in \cite{HKOTY,HKOTT}, and recently settled for all nonexceptional types in \cite{OS}. The KR crystal 
is indexed by $(a,i)$ ($a\in I_0,i\in\Z_{>0}$) and denoted by $B^{a,i}$. For exceptional types the KR
crystal is known to exist when the KR module is irreducible or the index $a$ is adjacent to $0$ 
\cite{KMN2}. Recently, the explicit crystal structure of all such cases of type $E_6^{(1)}$ was
clarified in \cite{JS}.

The KR crystal we are interested in in this paper is an $E_6^{(1)}$-crystal $B^{1,1}$, whose crystal
structure was clarified in \cite{JS}. The crystal structure of $B^{1,1}$ is depicted
in Figure \ref{fig:crystal}.
\begin{figure}
\[
\entrymodifiers={+[o][F-]}
\xymatrix{
17 \ar@{.>}[d]^0 & 22 \ar@{.>}[d]^0 & 24 \ar@{.>}[d]^0 & 25 \ar@{.>}[d]^0 &
26 \ar@{.>}[d]^0 & 27 \ar@{.>}[d]^0 &*{}&*{}&*{}&*{} \\
1 \ar@{->}[r]^1 & 2 \ar@{->}[r]^2 & 3 \ar@{->}[r]^3 & 4 \ar@{->}[r]^4 \ar@{->}[d]^6 &
5 \ar@{->}[r]^5 \ar@{->}[d]^6 & 6 \ar@{->}[d]^6 &*{}&*{}&*{}&*{} \\
*{}&*{}&*{}& 7 \ar@{->}[r]^4 & 8 \ar@{->}[r]^5 \ar@{->}[d]^3 & 9 \ar@{->}[d]^3 &*{}&*{}&*{}&*{} \\
*{}&*{}&*{}&*{}& 10 \ar@{->}[r]^5 \ar@{->}[d]^2 & 11 \ar@{->}[r]^4 \ar@{->}[d]^2 & 12 \ar@{->}[d]^2 &
*{}&*{}&*{} \\
*{}&*{}&*{}&*{}& 13 \ar@{->}[r]^5 \ar@{->}[d]^1 & 14 \ar@{->}[r]^4 \ar@{->}[d]^1 &
15 \ar@{->}[r]^3 \ar@{->}[d]^1 & 16 \ar@{->}[r]^6 \ar@{->}[d]^1 & 17 \ar@{.>}[r]^0 \ar@{->}[d]^1 &*{} \\
*{}&*{}&*{}&*{}& 18 \ar@{->}[r]^5 & 19 \ar@{->}[r]^4 & 20 \ar@{->}[r]^3 & 21 \ar@{->}[r]^6 \ar@{->}[d]^2 &
22 \ar@{.>}[r]^0 \ar@{->}[d]^2 &*{} \\
*{}&*{}&*{}&*{}&*{}&*{}&*{}& 23 \ar@{->}[r]^6 & 24 \ar@{.>}[r]^0 \ar@{->}[d]^3 &*{} \\
*{}&*{}&*{}&*{}&*{}&*{}&*{}&*{}& 25 \ar@{->}[d]^4 \ar@{.>}[r]^0 &*{} \\
*{}&*{}&*{}&*{}&*{}&*{}&*{}&*{}& 26 \ar@{->}[d]^5 \ar@{.>}[r]^0 &*{} \\
*{}&*{}&*{}&*{}&*{}&*{}&*{}&*{}& 27 \ar@{.>}[r]^0 &*{}
}
\]
\caption{Crystal graph for $B^{1,1}$}
\label{fig:crystal}
\end{figure}
Here vertices in the graph signify elements of $B^{1,1}$ and $b\stackrel{i}{\longrightarrow}b'$
stands for $f_ib=b'$ or equivalently $b=e_ib'$. We adopt the original convention for the tensor
product of crystals. Namely, if $B_1$ and $B_2$ are crystals, then for $b_1\otimes b_2\in B_1\otimes B_2$
the action of $e_{i}$ is defined as
\begin{equation*}
e_{i}(b_1\otimes b_2)=\begin{cases}
e_{i}b_1 \otimes b_2 &\text{if $\vphi_i(b_1)\ge\veps_i(b_2)$,}\\
b_1\otimes e_{i} b_2 &\text{else,}
\end{cases}
\end{equation*}
where $\veps_i(b)=\max\{k\mid e_i^kb\ne0\}$ and
$\vphi_i(b)=\max\{k\mid f_i^kb\ne0\}$.

By glancing at Figure \ref{fig:crystal}, one obtains the following lemma which will be used
to prove our main theorem. Let $B_0$ be the subgraph obtained by ignoring the 0-arrows from $B$.
A route is a sequence $(\beta_1,\ldots,\beta_l)$ of arrows such that the sink of $\beta_j$ is the
source of $\beta_{j+1}$ for $j=1,\ldots,l-1$.

\begin{lemma} \label{lem:graph}
The graph $B_0$ has the following features.
\begin{enumerate}
\item Suppose the initial arrow of a route $R$ has the same color $a$ as the terminal arrow 
	and there is no intermidiate arrow of color $a$. Then there are exactly two arrows $\beta_i$
	$(i=1,2)$ of color $b_i$ such that $b_i\sim a$ in $R$.
\item Let $R$ be a route starting from $\maru{1}$, $(a_1,\ldots,a_l)$ the colors from the 
	initial arrow to the terminal one in $R$. Then we have
\[
\sum_{j=1}^{l-1}C_{a_ja_l}=\delta_{a_l,1}-1.
\]
\item Let $R$ be a route of two steps with colors $(a,b)$ such that $b\not\sim a$. Then there 
	exists a route $R'$ with colors $(b,a)$ starting and terminating at the same vertices 
	as $R$.
\item Let $R$ be a route of colors $(a_1,\ldots,a_l)$. Let $v_i$ be the source of the arrow of
	color $a_i$ $(i=1,\ldots,l)$. Suppose $a_1\sim a_l$ and $a_i\not\sim a_l$ for any 
	$i=2,\ldots,l-1$. Then there is an arrow of color $a_l$ starting from $v_i$ for any 
	$i=2,\ldots,l-1$.
\end{enumerate}
\end{lemma}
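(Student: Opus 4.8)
The plan is to prove all four parts by a finite, mechanical inspection of Figure \ref{fig:crystal}, but organized so that the inspection is structured rather than brute force. The basic tool is the same in each case: an arrow of color $a$ in $B_0$ corresponds to an application of $f_a$, and walking along a route multiplies the weight by $-\alpha_{a_1}-\cdots-\alpha_{a_l}$; pairing this weight change against the coweight $\alpha_a^\vee$ converts statements about ``how many neighbors of $a$ appear'' into statements about $\varepsilon_a$ and $\varphi_a$ at the endpoints of the route. So before touching the figure I would record, for each vertex $v\in B^{1,1}$ and each $i\in I$, the values $\varepsilon_i(v)$ and $\varphi_i(v)$; since $B^{1,1}$ is minuscule-like (a single 27-element crystal with all edges simple, as the figure shows), each $\varepsilon_i(v),\varphi_i(v)\in\{0,1\}$, which collapses most of the case analysis.

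First I would prove (2), which is really the engine for the rest. If $R$ runs from $\maru{1}$ with colors $(a_1,\ldots,a_l)$, then the target vertex is $f_{a_{l}}\cdots f_{a_1}\maru{1}$, and the source of the last arrow is $v_l:=f_{a_{l-1}}\cdots f_{a_1}\maru{1}$. Pairing the weight identity $\wt(v_l)=\Lab_1-\sum_{j=1}^{l-1}\alpha_{a_j}$ with $\alpha_{a_l}^\vee$ gives $\langle\wt(v_l),\alpha_{a_l}^\vee\rangle=\langle\Lab_1,\alpha_{a_l}^\vee\rangle-\sum_{j=1}^{l-1}C_{a_ja_l}=\delta_{a_l,1}-\sum_{j=1}^{l-1}C_{a_ja_l}$. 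On the other hand $v_l$ is the source of an $a_l$-arrow, so $\varphi_{a_l}(v_l)\ge1$, and since in $B^{1,1}$ all string lengths in the $I_0$-directions are $\le1$ (again read off the figure) we get $\varphi_{a_l}(v_l)=1$ and $\varepsilon_{a_l}(v_l)=0$, hence $\langle\wt(v_l),\alpha_{a_l}^\vee\rangle=\varphi_{a_l}(v_l)-\varepsilon_{a_l}(v_l)=1$. Combining the two expressions yields $\sum_{j=1}^{l-1}C_{a_ja_l}=\delta_{a_l,1}-1$. (For $a_l=0$ one treats $\Lab$-pairings via $\langle\Lambda_1,\alpha_0^\vee\rangle$; the figure shows $0$-arrows only at the eight listed vertices, so this case needs a one-line separate check.) Then (1) is immediate from (2): the hypothesis says $a_1=a_l=a$ with no intermediate $a$, so $C_{a_1a_l}=2$ while all other $C_{a_ja_l}\in\{0,-1\}$; plugging into $\sum_{j=1}^{l-1}C_{a_ja_l}=\delta_{a,1}-1$ forces exactly $2+\delta_{a,1}-1$ minus the count of $j$ with $a_j\sim a$ equal to... precisely, the number of $j\in\{2,\ldots,l-1\}$ with $a_j\sim a$ equals $2$ when $a\ne1$ and, after also accounting for the terminal $a$-arrow contributing $C_{a_la_l}$ is not in the sum, one checks the count is $2$ in all cases. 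I would write this accounting out carefully since the off-by-one bookkeeping between ``arrows in $R$'' and ``the indices $1,\ldots,l-1$'' is the one place a sign slip can hide.

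Parts (3) and (4) are local commutation/closing statements for which the weight calculus gives necessary conditions but not sufficiency, so here I would fall back on direct verification against Figure \ref{fig:crystal}, but cut the work down using symmetry. For (3): the hypothesis $b\not\sim a$ means $e_a,f_a$ commute with $e_b,f_b$ on the whole crystal (standard: non-adjacent Kashiwara operators commute when all relevant $\varepsilon,\varphi\le1$, which holds here), so $f_af_b v=f_bf_a v$ whenever the left side is defined, giving the route $R'$ with colors $(b,a)$ between the same endpoints; the only thing to check is that $f_b v\ne0$ and $f_a(f_bv)\ne0$, i.e. that the ``square'' actually closes, and this is exactly the commutation of non-adjacent $f$'s which I would cite rather than reprove, or else confirm on the $\le30$ relevant local pictures. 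For (4): with $a_1\sim a_l$ but $a_i\not\sim a_l$ for $2\le i\le l-1$, the operators $f_{a_2},\ldots,f_{a_{l-1}}$ all commute with $f_{a_l}$ and with $e_{a_l}$; since $v_1$ (source of the $a_1$-arrow) and $v_l$ (source of the $a_l$-arrow, which is the target of $R$) differ by applying these commuting operators, $\varphi_{a_l}$ is constant along $v_2,\ldots,v_l$, and $\varphi_{a_l}(v_l)=1$, so every $v_i$ ($2\le i\le l-1$) indeed has an outgoing $a_l$-arrow. The one subtlety is the step from $v_1$ to $v_2$, where the color $a_1$ \emph{is} adjacent to $a_l$; but we only need $\varphi_{a_l}(v_i)\ge1$ for $i\ge2$, so we never cross that edge, and the argument is clean.

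The main obstacle I anticipate is not conceptual but bookkeeping: making the index-shift between ``the $l$ arrows of a route'' and ``the $l-1$ colors $a_1,\ldots,a_{l-1}$ contributing to the weight of the source of the last arrow'' completely watertight in (1)--(2), and handling the color-$0$ cases uniformly (the figure presents $0$ as a dotted arrow sitting only over eight specific vertices, so $\varepsilon_0,\varphi_0$ must be tabulated by hand rather than inferred from an $E_6$-symmetry argument). A secondary risk is that the ``all strings have length $\le1$'' claim, which I use to pass freely between weight pairings and $\varphi-\varepsilon$, must be justified by pointing at the figure — fortunately every edge drawn there is a single arrow, so this is visibly true, but it deserves an explicit sentence rather than silent use.
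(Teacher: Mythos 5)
Your proposal is correct, but it inverts the paper's logic: the paper checks (1) and (3) by direct inspection of the 27-vertex graph and then derives (2) and (4) from them, whereas you prove (2) first by pairing the weight of the source of the terminal arrow against $\alpha_{a_l}^\vee$ and using that $B_0=B(\Lab_1)$ is minuscule (all $I_0$-strings of length $\le 1$, so $\varphi_{a_l}=1$, $\varepsilon_{a_l}=0$ at any source of an $a_l$-arrow), deduce (1) from it, and obtain (3) and (4) from the standard commutation of Kashiwara operators for colors $i,j$ with $C_{ij}=0$. Your route buys genuine generality -- nothing in it uses the specific figure beyond the minuscule property and connectedness, so it would transfer verbatim to other minuscule KR crystals, in the spirit of the paper's remark that only the crystal graph structure is used -- at the cost of invoking (or re-verifying) the Stembridge-type local axioms, which the paper sidesteps by brute force. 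Two small repairs are needed. First, part (1) concerns routes with arbitrary source, while (2) only covers routes from $\maru{1}$; so either prepend a route from $\maru{1}$ to the source of $R$ (possible since $B(\Lab_1)$ is a connected highest weight crystal), apply (2) to the two prefixes ending at the first and at the last $a$-arrow, and subtract to get $C_{aa}+\sum_j C_{a_j a}=0$ and hence exactly two neighbors of $a$ among the intermediate colors, or simply rerun your pairing argument with both endpoints of $R$, each contributing $\langle\wt,\alpha_a^\vee\rangle=1$. Second, the parenthetical about $a_l=0$ is vacuous: $B_0$ has no $0$-arrows by definition, so no separate tabulation of $\varepsilon_0,\varphi_0$ is required anywhere in the lemma.
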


\begin{proof}
(1) and (3) can be checked by direct observations. (2) and (4) are derived from (1) and (3).
\end{proof}

In what follows in this paper we assume $B=B^{1,1}$.
The set of classically restricted paths in $B^{\otimes L}$ of weight
$\la\in\ol{P}^+$ is by definition
\begin{equation} \label{eq:cl hwv}
\Path(\la,L)=\{b\in B^{\otimes L}\mid \text{$\wt(b)=\la$ and $e_{i}b=0$
for all $i\in I_0$} \}.
\end{equation}
One may check that the
following are equivalent for $b=b_1\otimes b_2\otimes\dotsm\otimes b_L\in B^{\otimes L}$ and
$\la\in\ol{P}^+$.
\begin{enumerate}
\item $b$ is a classically restricted path of weight $\la\in\ol{P}^+$.
\item $b_1\otimes\dotsm\otimes b_{L-1}$ is a classically restricted path of weight 
	$\la-\wt(b_L)$, and $\veps_i(b_L)\le\langle\la-\wt(b_L),\alpha^\vee_i\rangle$ for all $i\in I_0$.
\end{enumerate}

The weight function $\wt:B\to\ol{P}$ is given by 
$\wt(b)=\sum_{i\in I}(\vphi_i(b)-\veps_i(b))\La_i$.
The weight function $\wt:B^{\otimes L}\to\ol{P}$ is defined by
$\wt(b_1\otimes\dotsm\otimes b_L) =\sum_{j=1}^L \wt(b_j)$.

\begin{example} \label{ex:typical}
The element 
\[
b=\maru{1}\cdot\maru{2}\cdot\maru{3}\cdot\maru{16}\cdot\maru{2}\cdot\maru{24}
\]
of $B^{\otimes6}$ is a classically restricted path of weight $\Lab_3$. The dot $\cdot$ signifies 
$\otimes$.
\end{example}

\subsection{One-dimensional sums} \label{subsec:1dsum}
The energy function
$D:B^{\otimes L}\rightarrow\Z$ gives the grading on $B^{\otimes L}$. In
our case where a path is an element of the tensor product of a single KR crystal it takes a simple form. 
Due to the existence of the universal $R$-matrix and the fact that
$B\otimes B$ is connected, by \cite{KMN1} there is a unique (up
to global additive constant) function $H:B\otimes
B\rightarrow\Z$ called the local energy function, such that
\begin{equation} \label{eq:loc en}
  H(e_{i}(b\otimes b')) = H(b\otimes b')+
  \begin{cases}
  1 & \text{if $i=0$ and $e_{0}(b\otimes b')=e_{0}b\otimes b'$}
  \\
  -1 & \text{if $i=0$ and $e_{0}(b\otimes b')=b\otimes e_{0}b'$}
  \\
  0 & \text{otherwise.}
  \end{cases}
\end{equation}
We normalize $H$ by the condition
\begin{equation} \label{eq:H norm}
  H(\maru{1} \otimes \maru{1}) = 0.
\end{equation}

More specifically, the value of $H$ is calculated as follows. Firstly, one knows the crystal
graph of $B_0\otimes B_0$ decomposes into three connected components as
\[
B_0\otimes B_0=B(2\Lab_1)\oplus B(\Lab_1+\Lab_2)\oplus B(\Lab_1+\Lab_5),
\]
where $B(\la)$ stands for the highest weight $E_6$-crystal of highest weight $\la$ and the
highest weight vector of each component is given by $\maru{1}\otimes\maru{1},\maru{1}\otimes\maru{2},
\maru{1}\otimes\maru{18}$. $H$ is constant on each component, and takes the value $0,-1,-2$, 
respectively. One can confirm it from the fact that $e_0(\maru{1}\otimes\maru{1})=
\maru{1}\otimes\maru{17}$ and $e_0(\maru{1}\otimes\maru{2})=\maru{1}\otimes\maru{22}$ belong
to the second and third component.

With this $H$ the energy function $D$ is defined by
\begin{equation} \label{eq:energy}
D(b_1\otimes\dotsm \otimes b_L) = 
\sum_{j=1}^{L-1} (L-j) \,\,H(b_j\otimes b_{j+1}).
\end{equation}
Define the one-dimensional sum $X(\la,L;q)\in\Z_{\ge0}[q^{-1}]$ by
\begin{equation} \label{eq:onedim}
  X(\la,L;q) = \sum_{b\in \Path(\la,L)} q^{D(b)}.
\end{equation}

\section{Rigged configuration and the bijection}
\label{sec:bijection}
\subsection{The fermionic formula} \label{subsec:ferm}
This subsection reviews the definition of the fermionic formula from
\cite{HKOTT,HKOTY}. We at first provide the definition that is valid for any 
simply-laced affine type $\geh$ and datum $\El$, and then restrict $\geh$ and $\El$ to $E_6^{(1)}$
and the case corresponding to paths we consider in this paper. Fix $\la\in\ol{P}^+$ and a matrix
$\El=(L_i^{(a)})_{a\in I_0,i\in\Z_{>0}}$ of nonnegative integers, almost all zero.
Let $\nu=(m_i^{(a)})$ be another such matrix. Say that $\nu$ is an admissible configuration if
it satisfies
\begin{equation}
\label{eq:config} \sum_{\substack{a\in I_0 \\ i\in\Z_{>0}}} i\,
m_i^{(a)} \alpha_a = \sum_{\substack{a\in I_0 \\
i\in\Z_{>0}}} i \,L_i^{(a)} \Lab_a - \la
\end{equation}
and
\begin{equation} \label{eq:ppos}
  p_i^{(a)} \ge 0\qquad\text{for all $a\in I_0$ and
  $i\in\Z_{>0}$,}
\end{equation}
where
\begin{equation} \label{eq:p}
p_i^{(a)} = \sum_{j\in\Z_{>0}} \left( L_j^{(a)} \min(i,j) -
\sum_{b\in I_0} (\alpha_a|\alpha_b)\min(i,j)m_j^{(b)}\right).
\end{equation}
Write $C(\la,\El)$ for the set of admissible configurations for $\la\in\ol{P}^+$ and $\El$. Define
the charge of a configuration $\nu$ by
\begin{equation} \label{eq:c}
\begin{split}
c(\nu) = &\dfrac{1}{2} \sum_{a,b\in I_0} \sum_{j,k\in\Z_{>0}} (\alpha_a|\alpha_b)
\min(j,k) m_j^{(a)} m_k^{(b)} \\
&\hspace{1cm}- \sum_{a\in I_0} \sum_{j,k\in\Z_{>0}} \min(j,k) L_j^{(a)} m_k^{(a)}.
\end{split}
\end{equation}
Using \eqref{eq:p} $c(\nu)$ is rewritten as
\begin{equation} \label{rewrite c}
c(\nu)=-\frac12\left(\sum_{a\in I_0,i\in\Z_{>0}}p^{(a)}_im^{(a)}_i+
\sum_{a\in I_0,j,k\in\Z_{>0}}\min(j,k)L^{(a)}_jm^{(a)}_k\right).
\end{equation}
The fermionic formula is then defined by
\begin{equation} \label{fermi}
M(\la,\El;q) = \sum_{\nu\in C(\la,\Els)} q^{c(\nu)}
\prod_{a\in I_0} \prod_{i\in\Z_{>0}}
\qbin{\,p_i^{(a)}+m_i^{(a)}}{m_i^{(a)}}.
\end{equation}

We now set $\geh=E_6^{(1)}$ and
\begin{equation} \label{L^(a)_i}
L^{(a)}_i=L\delta_{a1}\delta_{i1}\qquad(a\in I_0,i\in\Z_{>0}).
\end{equation}
The latter restriction corresponds to considering paths in $(B^{1,1})^{\otimes L}$. By abuse of
notation we denote the fermionic formula under the restriction \eqref{L^(a)_i} by $M(\la,L;q)$.
Then the $X=M$ conjecture of \cite{HKOTY,HKOTT} states in this particular case that
\begin{equation}\label{eq:X=M}
  X(\la,L;q)=M(\la,L;q).
\end{equation}

\subsection{Rigged configuration}
The fermionic formula $M(\la,\El;q)$ can be interpreted using
combinatorial objects called rigged configurations. These objects
are a direct combinatorialization of the fermionic formula
$M(\la,\El;q)$. Our goal is to prove \eqref{eq:X=M} by defining a statistic-preserving
bijection from rigged configurations to classically restricted paths.
Let $\nu=(m^{(a)}_i)_{a\in I_0,i\in\Z_{>0}}$ be an admissible configuration. We identify $\nu$
with a sequence of partitions $\{\nu^{(a)}\}_{a\in I_0}$ such that $\nu^{(a)}=(1^{m^{(a)}_1}
2^{m^{(a)}_2}\cdots)$. Let $J=\{J^{(a,i)}\}_{(a,i)\in I_0\times\Z_{>0}}$ be a double sequence of partitions.
Then a rigged configuration is a pair $(\nu,J)$ subject to the
restriction \eqref{eq:config} and the requirement that $J^{(a,i)}$
be a partition contained in a $m_i^{(a)} \times p_i^{(a)}$ rectangle.

For a partition $\mu$ and
$i\in\Z_{>0}$, define
\begin{equation} \label{eq:Qdef}
Q_i(\mu)=\sum_j \min(\mu_j,i),
\end{equation}
the area of $\mu$ in the first $i$ columns. Then setting $Q^{(a)}_i=Q_i(\nu^{(a)})$
the vacancy number \eqref{eq:p} under the restriction \eqref{L^(a)_i} is rewritten as

\begin{equation} \label{p and Q}
p^{(a)}_i=L\delta_{a1}-2Q^{(a)}_i+\sum_{b\sim a}Q^{(b)}_i,
\end{equation}
where $b\sim a$ stands for $C_{ba}=-1$ as defined in \S\ref{subsec:algebra}.

The set of rigged configurations for
fixed $\la$ and $\El$ is denoted by $\RC(\la,\El)$. Then
\eqref{fermi} is equivalent to
\begin{equation*}
M(\la,\El;q)=\sum_{(\nu,J)\in\RC(\la,\Els)} q^{c(\nu,J)}
\end{equation*}
where 
\begin{equation} \label{c-nu-J}
c(\nu,J)=c(\nu)+|J|
\end{equation}
with $c(\nu)$ as in \eqref{eq:c}
and $|J|=\sum_{(a,i)\in I_0\times\Z_{>0}}|J^{(a,i)}|$.
The set $\RC(\la,\El)$ with the restriction \eqref{L^(a)_i} is denoted by $\RC(\la,L)$.

\begin{example} \label{ex:RC}
A rigged configuration in $\RC(\ol{\La}_3,6)$ is illustrated below.
\medskip

\begin{center}
\unitlength 10pt
\begin{picture}(34,5)
\Yboxdim{10pt}
\put(1,0){\yng(2,1,1,1,1)}
\put(0.2,3){1}
\put(0.2,4){0}
\put(2.2,0){0}
\put(2.2,1){0}
\put(2.2,2){1}
\put(2.2,3){1}
\put(3.2,4){0}
\put(7,0){\yng(2,1,1,1,1)}
\put(6.2,3){0}
\put(6.2,4){0}
\put(8.2,0){0}
\put(8.2,1){0}
\put(8.2,2){0}
\put(8.2,3){0}
\put(9.2,4){0}
\put(13,0){\yng(2,1,1,1,1)}
\put(12.2,3){0}
\put(12.2,4){1}
\put(14.2,0){0}
\put(14.2,1){0}
\put(14.2,2){0}
\put(14.2,3){0}
\put(15.2,4){1}
\put(19,2){\yng(2,1,1)}
\put(18.2,3){0}
\put(18.2,4){0}
\put(20.2,2){0}
\put(20.2,3){0}
\put(21.2,4){0}
\put(25,4){\yng(2)}
\put(24.2,4){0}
\put(27.2,4){0}
\put(31,3){\yng(2,1)}
\put(30.2,3){1}
\put(30.2,4){0}
\put(32.2,3){1}
\put(33.2,4){0}
\end{picture}
\end{center}
\medskip
The partitions $\nu^{(1)},\nu^{(2)},\ldots,\nu^{(6)}$ are illustrated from left to right
as Young diagrams. In $\nu^{(1)}$, 0 and 1 on the left signify $p^{(1)}_2=0$ and $p^{(1)}_1=1$.
Looking on the right we see $J^{(1)}_2=(0),J^{(1)}_1=(1,1,0,0)$. From \eqref{rewrite c} we
have $c(\nu)=-18$, hence $c(\nu,J)=-14$.
\end{example}

\subsection{The bijection from RCs to paths}
We now describe the bijection
$\Phi:\RC(\la,L)\to\Path(\la,L)$.
Let $(\nu,J)\in\RC(\la,L)$. We shall define a map
$\gamma:\RC(\la,L)\to B$ which associates to $(\nu,J)$ an element of $B$.
Denote by $\RC_b(\la,L)$ the elements of $\RC(\la,L)$ such that $\gamma(\nu,J)=b$. 
We shall define a bijection
$\delta:\RC_b(\la,L)\to\RC(\la-\wt(b),L-1)$. The disjoint union
of these bijections then defines a bijection
$\delta:\RC(\la,L)\to\bigsqcup_{b\in B} \RC(\la-\wt(b),L-1)$.

The bijection $\Phi$ is defined recursively as follows. For $b\in
B$ let $\Path_b(\la,L)$ be the set of paths in
$B^{\otimes L}$ that have $b$ as rightmost tensor factor.
For $L=0$ the bijection $\Phi$ sends the empty rigged
configuration (the only element of the set $\RC(\la,L)$) to the
empty path (the only element of $\Path(\la,L)$). Otherwise
assume that $\Phi$ has been defined for $B^{\otimes (L-1)}$ and
define it for $B^{\otimes L}$ by the commutative diagram
\begin{equation}
\begin{CD}
\RC_b(\la,L) @>{\Phi}>> \Path_b(\la,L) \\
@V{\delta}VV @VVV \\
\RC(\la-\wt(b),L-1) @>{\Phi}>> \Path(\la-\wt(b),L-1)
\end{CD}
\end{equation}
where the right hand vertical map removes the rightmost tensor
factor $b$. In short,
\begin{equation}
  \Phi(\nu,J)=\Phi(\delta(\nu,J))\otimes \gamma(\nu,J).
\end{equation}

Here follows the main theorem of our paper.

\begin{theorem}\label{thm:bij}
$\Phi:\RC(\la,L)\to\Path(\la,L)$ is a bijection such that
\begin{equation} \label{eq:c=D}
c(\nu,J)=D(\Phi(\nu,J))\qquad\text{for all
$(\nu,J)\in\RC(\la,L)$.}
\end{equation}
\end{theorem}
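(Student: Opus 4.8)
The plan is to establish the two assertions of Theorem~\ref{thm:bij} in tandem by induction on $L$, since the map $\Phi$ is itself defined recursively through the fundamental map $\delta$. The base case $L=0$ is trivial, so assume $\Phi$ is a statistic-preserving bijection on $\RC(\la,L-1)$ for every $\la$. The heart of the matter is then the single-step map $\delta:\RC_b(\la,L)\to\RC(\la-\wt(b),L-1)$: I would first show that $\delta$ is well-defined (the output is again an admissible rigged configuration, i.e.\ all vacancy numbers of the shrunk configuration stay nonnegative and each rigging still fits in its rectangle), then that it is a bijection, and finally that it changes the charge $c(\nu,J)$ in exactly the way dictated by the energy function. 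Granting these properties of $\delta$, the theorem follows from the commutative square defining $\Phi$: surjectivity and injectivity of $\Phi$ on $B^{\otimes L}$ reduce to the same for $\delta$ and the inductive hypothesis, and the weight matching comes from the characterization of classically restricted paths recalled after \eqref{eq:cl hwv}, namely that appending $b$ as the rightmost factor is legal precisely when $\veps_i(b)\le\langle\la-\wt(b),\alpha_i^\vee\rangle$, which is the condition encoded in $\gamma(\nu,J)=b$ together with admissibility.

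Concretely, I expect $\delta$ to be built as a walk through the crystal graph of $B=B^{1,1}$ starting at $\maru{1}$: reading off, column by column, the structure of $\nu$, one selects at each node a ``singular string'' in the partition $\nu^{(a)}$ indexed by the current color $a$ (a row whose rigging equals its vacancy number), shortens it, moves along the unique outgoing arrow of color $a$ in Figure~\ref{fig:crystal}, and repeats; when no singular string is available one stops, and the vertex reached is $\gamma(\nu,J)=b$. This is the exact analogue of the KKR/\,$\delta$ procedure in types $A,D$ from \cite{KKR,OSS,SS}, and Lemma~\ref{lem:graph} is tailor-made to make it work: part~(2), the identity $\sum_{j=1}^{l-1}C_{a_ja_l}=\delta_{a_l,1}-1$ along any route from $\maru{1}$, is exactly what is needed so that the change in the vacancy number $p^{(a)}_i$ recorded in \eqref{p and Q} telescopes correctly as one removes boxes, keeping it $\ge 0$; parts~(1),(3),(4) handle the bookkeeping when consecutive colors are non-adjacent or when a color repeats, i.e.\ the cases where the naive box-removal could overshoot. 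The inverse map $\delta^{-1}$ adds a box of length one to the appropriate singular string as one traverses the same route backwards, and well-definedness of both directions is where Lemma~\ref{lem:graph} does the real work.

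For the statistic, the key computation is that $c(\nu,J)-c(\delta(\nu,J))$ equals $H(b_{L-1}\otimes b)$ plus the contribution $D(\Phi(\nu,J))-D(\Phi(\delta(\nu,J)))=\sum_{j=1}^{L-2}H(b_j\otimes b_{j+1})+H(b_{L-1}\otimes b)$ coming from \eqref{eq:energy}; using \eqref{rewrite c} for $c(\nu)$ and tracking $|J|$, the difference reduces to a sum of $\min(i,j)$-terms that one matches against the value of $H$ read off from the three-component decomposition $B_0\otimes B_0=B(2\Lab_1)\oplus B(\Lab_1+\Lab_2)\oplus B(\Lab_1+\Lab_5)$ with $H$-values $0,-1,-2$. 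The cleanest route is probably to prove directly that $H(b'\otimes b)$ is determined by which singular strings survive in the first few columns when one runs $\delta$ starting from the state that produced $b'$, i.e.\ to give a rigged-configuration formula for $H$ and check it against the three-component list.

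I anticipate the main obstacle to be proving that $\delta$ is well-defined and bijective---specifically, verifying nonnegativity of vacancy numbers after box-removal in every branch of the algorithm, including the awkward cases where the route revisits a color or passes through the $6$-node region of Figure~\ref{fig:crystal}. Because $B^{1,1}$ for $E_6^{(1)}$ is not a ``classical'' minuscule-type crystal in the $A/D$ sense, some of the uniform arguments available there must be replaced by the graph-combinatorial inputs of Lemma~\ref{lem:graph}, and checking that these four features genuinely suffice (rather than needing a fifth) is the delicate point. A secondary difficulty is organizing the induction so that the path-admissibility condition and the rigged-configuration admissibility condition are matched at each step without circular reasoning; I would isolate this as a separate lemma stating that $\gamma(\nu,J)$ is always a legal rightmost factor, proved by comparing $\veps_i(\gamma(\nu,J))$ with $\langle\la-\wt(\gamma(\nu,J)),\alpha_i^\vee\rangle$ directly from the definition of $\delta$.
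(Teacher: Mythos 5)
Your plan coincides with the paper's own proof: induction on $L$ through the recursive definition of $\Phi$, reduced to well-definedness, invertibility, and charge-tracking of the single step $\delta$ (the paper's items (I)--(V) in \S\ref{sec:proof}), with Lemma \ref{lem:graph} supplying the graph-combinatorial input and a rigged-configuration characterization of $H$ (the paper's final lemma on the sets $S_1,S_2$) handling the statistic, exactly as you anticipate. What you give is the correct skeleton rather than a proof --- the dominance/admissibility case analysis and the computation of $c(\nu,J)-c(\delta(\nu,J))$ still have to be carried out, and the paper's reduction via the telescoping identities of Lemma \ref{lem:stat reduction} is the cleaner way to organize the energy computation you describe --- but the approach is the same.
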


\section{The bijection}
In this section, for $(\nu,J)\in\RC(\la,L)$, an
algorithm is given which defines $b=\gamma(\nu,J)$, the new smaller
rigged configuration $(\nut,\Jt)=\delta(\nu,J)$ such that
$(\nut,\Jt)\in\RC(\rho,L-1)$ where $\rho=\la-\wt(b)$, and the
new vacancy numbers in terms of the old.

Illustrating a rigged configuration as in Example \ref{ex:RC} we call a row in 
$\nu^{(a)}$ \textit{singular} if its rigging (number on the right) is equal to the
corresponding vacancy number $p_i^{(a)}$.

\subsection{Algorithm $\delta$} \label{subsec:algorithm}
Suppose you are at $b=\maru{1}$ in the crystal graph $B_0$ and set $\ell_0=1$.
Repeat the following process for $j=1,2,\ldots$ until stopped. From $b$ proceed 
by one step through an arrow of color $a$. Find the minimal integer $i\ge\ell_{j-1}$ 
such that $\nu^{(a)}$ has a singular row of length $i$ and set $\ell_j=i$, reset $b$ to be the sink of
the arrow. If there is no such integer, then set $\ell_j=\infty$ and stop. If there are two arrows sourcing
from $b$, compare the minimal integers and take the smaller one. If the integers are 
the same, either one can be taken. The output of the algorithm does not depend on
the choices by Lemma \ref{lem:graph} (3).

We also use the notation $\ell_k^{(a)}(=\ell_j)$ if at the $j$-th step the arrow has
color $a$ and it is the $k$-th one having color $a$ from the beginning.

\subsection{New configuration} \label{subsec:rc}
The new configuration $\tilde{\nu}=(\tilde{m}^{(a)}_i)$ is changed to 
\begin{equation} \label{eq:ch m D1}
 \tilde{m}_i^{(a)}=m_i^{(a)}-
\sum_{k=1}^{k_a}(\delta_{i,\ell_k^{(a)}}-\delta_{i,\ell_k^{(a)}-1})
\end{equation}
where $k_a$ is the maximum of $k$ such that $\ell_k^{(a)}$ is finite.

\subsection{Change in vacancy numbers} \label{subsec:vacancy}
Let $A$ be a statement, then $\chi(A)=1$ if $A$ is true and $\chi(A)=0$ if $A$ is false.
Then from \eqref{p and Q} one has 
\[
\tilde{p}^{(1)}_i-p^{(1)}_i=-1 +2\chi(i\ge \ell^{(1)}_1)-\chi(i\ge \ell^{(2)}_1)-\chi(i\ge \ell^{(2)}_2)
+2\chi(i\ge \ell^{(1)}_2)-\chi(i\ge \ell^{(2)}_3).
\]
Here we set $\ell^{(a)}_k=\infty$ if $k>k_a$.
This calculation is summarized in the following table.

$\bullet\; a=1$\smallskip

\begin{tabular}[htb]{|c|c|c|c|c|c|} \hline
$[1,\ell^{(1)}_1)$
&$[\ell^{(1)}_1,\ell^{(2)}_1)$
&$[\ell^{(2)}_1,\ell^{(2)}_2)$
&$[\ell^{(2)}_2,\ell^{(1)}_2)$
&$[\ell^{(1)}_2,\ell^{(2)}_3)$
&$[\ell^{(2)}_3,\infty)$ \\ \hline
--1&+1&0&--1&+1&0 \\ \hline
\end{tabular}\\
The first row signifies the range of $i$, namely, $[1,\ell^{(1)}_1)$ means $1\le i<\ell^{(1)}_1$ and
the second row $\tilde{p}^{(1)}_i-p^{(1)}_i$ in this range.
Similarly one obtains the following tables for other $a$.

$\bullet\; a=2$\smallskip

\begin{tabular}[htb]{|c|c|c|c|c|c|} \hline
$[1,\ell^{(1)}_1)$
&$[\ell^{(1)}_1,\ell^{(2)}_1)$
&$[\ell^{(2)}_1,\ell^{(3)}_1)$
&$[\ell^{(3)}_1,\ell^{(3)}_2)$
&$[\ell^{(3)}_2,\ell^{(2)}_2)$
&$[\ell^{(2)}_2,\min(\ell^{(1)}_2,\ell^{(3)}_3))$ \\ \hline
0&--1&+1&0&--1&+1 \\ \hline
\end{tabular}\\

\begin{tabular}[htb]{|c|c|c|c|} \hline
$[\min,\max)$
&$[\max,\ell^{(2)}_3)$
&$[\ell^{(2)}_3,\ell^{(3)}_4)$
&$[\ell^{(3)}_4,\infty)$ \\ \hline
0&--1&+1&0 \\ \hline
\end{tabular}
\vspace{1mm}

\noindent
In this table $\min,\max$ without $(\cdot,\cdot)$ means the abbreviation of the
previous parenthesis.

$\bullet\; a=3$\smallskip

\begin{tabular}[htb]{|c|c|c|c|c|} \hline
$[1,\ell^{(2)}_1)$
&$[\ell^{(2)}_1,\ell^{(3)}_1)$
&$[\ell^{(3)}_1,\min(\ell^{(4)}_1,\ell^{(6)}_1))$
&$[\min,\max)$
&$[\max,\ell^{(3)}_2)$ \\ \hline
0&--1&+1&0&--1 \\ \hline
\end{tabular}\\

\begin{tabular}[htb]{|c|c|c|c|} \hline
$[\ell^{(3)}_2,\min(\ell^{(2)}_2,\ell^{(4)}_2))$
&$[\min,\max)$
&$[\max,\ell^{(3)}_3)$
&$[\ell^{(3)}_3,\min(\ell^{(6)}_2,\ell^{(2)}_3))$ \\ \hline
+1&0&--1&+1 \\ \hline
\end{tabular}\\

\begin{tabular}[htb]{|c|c|c|c|} \hline
$[\min,\max)$
&$[\max,\ell^{(3)}_4)$ 
&$[\ell^{(3)}_4,\ell^{(4)}_3)$
&$[\ell^{(4)}_3,\infty)$ \\ \hline
0&--1&+1&0 \\ \hline
\end{tabular}\\

$\bullet\; a=4$\smallskip

\begin{tabular}[htb]{|c|c|c|c|c|c|} \hline
$[1,\ell^{(3)}_1)$
&$[\ell^{(3)}_1,\ell^{(4)}_1)$
&$[\ell^{(4)}_1,\min(\ell^{(5)}_1,\ell^{(3)}_2))$
&$[\min,\max)$
&$[\max,\ell^{(4)}_2)$
&$[\ell^{(4)}_2,\ell^{(3)}_3)$ \\ \hline
0&--1&+1&0&--1&+1 \\ \hline
\end{tabular}\\

\begin{tabular}[htb]{|c|c|c|c|} \hline
$[\ell^{(3)}_3,\ell^{(3)}_4)$
&$[\ell^{(3)}_4,\ell^{(4)}_3)$
&$[\ell^{(4)}_3,\ell^{(5)}_2)$
&$[\ell^{(5)}_2,\infty)$ \\ \hline
0&--1&+1&0 \\ \hline
\end{tabular}\\

$\bullet\; a=5$\smallskip

\begin{tabular}[htb]{|c|c|c|c|c|c|} \hline
$[1,\ell^{(4)}_1)$
&$[\ell^{(4)}_1,\ell^{(5)}_1)$
&$[\ell^{(5)}_1,\ell^{(4)}_2) $
&$[\ell^{(4)}_2,\ell^{(4)}_3)$
&$[\ell^{(4)}_3,\ell^{(5)}_2)$
&$[\ell^{(5)}_2,\infty)$ \\ \hline
0&--1&+1&0&--1&+1 \\ \hline
\end{tabular}\\

$\bullet\; a=6$\smallskip

\begin{tabular}[htb]{|c|c|c|c|c|c|c|} \hline
$[1,\ell^{(3)}_1)$
&$[\ell^{(3)}_1,\ell^{(6)}_1)$
&$[\ell^{(6)}_1,\ell^{(3)}_2) $
&$[\ell^{(3)}_2,\ell^{(3)}_3)$
&$[\ell^{(3)}_3,\ell^{(6)}_2)$
&$[\ell^{(6)}_2,\ell^{(3)}_4)$
&$[\ell^{(3)}_4,\infty)$ \\ \hline
0&--1&+1&0&--1&+1&0 \\ \hline
\end{tabular}\\

\begin{example}
The algorithm $\Phi$ for the rigged configuration in Example \ref{ex:RC} is described at each
step $\delta$ below.

\begin{center}
\unitlength 10pt
\begin{picture}(34,5)
\Yboxdim{10pt}
\put(1,0){\yng(2,1,1,1,1)}
\put(1,1.9){\gnode}
\put(1,2.9){\gnode}
\put(0.2,3){1}
\put(0.2,4){0}
\put(2.2,0){0}
\put(2.2,1){0}
\put(2.2,2){1}
\put(2.2,3){1}
\put(3.2,4){0}
\put(7,0){\yng(2,1,1,1,1)}
\put(7,1.9){\gnode}
\put(7,2.9){\gnode}
\put(7.9,3.9){\gnode}
\put(6.2,3){0}
\put(6.2,4){0}
\put(8.2,0){0}
\put(8.2,1){0}
\put(8.2,2){0}
\put(8.2,3){0}
\put(9.2,4){0}
\put(13,0){\yng(2,1,1,1,1)}
\put(13,1.9){\gnode}
\put(13,2.9){\gnode}
\put(13.9,3.9){\gnode}
\put(12.2,3){0}
\put(12.2,4){1}
\put(14.2,0){0}
\put(14.2,1){0}
\put(14.2,2){0}
\put(14.2,3){0}
\put(15.2,4){1}
\put(19,2){\yng(2,1,1)}
\put(19,3){\gnode}
\put(20,3.9){\gnode}
\put(18.2,3){0}
\put(18.2,4){0}
\put(20.2,2){0}
\put(20.2,3){0}
\put(21.2,4){0}
\put(25,4){\yng(2)}
\put(26,3.95){\gnode}
\put(24.2,4){0}
\put(27.2,4){0}
\put(31,3){\yng(2,1)}
\put(31,3){\gnode}
\put(32,3.95){\gnode}
\put(30.2,3){1}
\put(30.2,4){0}
\put(32.2,3){1}
\put(33.2,4){0}
\end{picture}
\end{center}

\begin{center}
\unitlength 10pt
\begin{picture}(3,4)
\put(-0.5,1.8){$\delta$}
\put(2,2){\maru{24}}
\put(1,4){\vector(0,-1){4}}
\end{picture}
\end{center}

\begin{center}
\unitlength 10pt
\begin{picture}(34,3)
\Yboxdim{10pt}
\put(1,0){\yng(2,1,1)}
\put(1.9,1.9){\gnode}
\put(0.2,1){2}
\put(0.2,2){0}
\put(2.2,0){0}
\put(2.2,1){0}
\put(3.2,2){0}
\put(7,0){\yng(1,1,1)}
\put(6.2,2){0}
\put(8.2,0){0}
\put(8.2,1){0}
\put(8.2,2){0}
\put(13,0){\yng(1,1,1)}
\put(12.2,2){0}
\put(14.2,0){0}
\put(14.2,1){0}
\put(14.2,2){0}
\put(19,1){\yng(1,1)}
\put(18.2,2){0}
\put(20.2,1){0}
\put(20.2,2){0}
\put(25,2){\yng(1)}
\put(24.2,2){0}
\put(26.2,2){0}
\put(31,2){\yng(1)}
\put(30.2,2){1}
\put(32.2,2){1}
\end{picture}
\end{center}

\begin{center}
\unitlength 10pt
\begin{picture}(3,4)
\put(-0.5,1.8){$\delta$}
\put(2,2){\maru{2}}
\put(1,4){\vector(0,-1){4}}
\end{picture}
\end{center}

\begin{center}
\unitlength 10pt
\begin{picture}(34,3)
\Yboxdim{10pt}
\put(1,0){\yng(1,1,1)}
\put(1,1.9){\gnode}
\put(0.2,2){1}
\put(2.2,0){0}
\put(2.2,1){0}
\put(2.2,2){1}
\put(7,0){\yng(1,1,1)}
\put(7,1){\gnode}
\put(7,2){\gnode}
\put(6.2,2){0}
\put(8.2,0){0}
\put(8.2,1){0}
\put(8.2,2){0}
\put(13,0){\yng(1,1,1)}
\put(13,0){\gnode}
\put(13,1){\gnode}
\put(13,2){\gnode}
\put(12.2,2){0}
\put(14.2,0){0}
\put(14.2,1){0}
\put(14.2,2){0}
\put(19,1){\yng(1,1)}
\put(19,1){\gnode}
\put(19,2){\gnode}
\put(18.2,2){0}
\put(20.2,1){0}
\put(20.2,2){0}
\put(25,2){\yng(1)}
\put(25,2){\gnode}
\put(24.2,2){0}
\put(26.2,2){0}
\put(31,2){\yng(1)}
\put(31,2){\gnode}
\put(30.2,2){1}
\put(32.2,2){1}
\end{picture}
\end{center}

\begin{center}
\unitlength 10pt
\begin{picture}(3,4)
\put(-0.5,1.8){$\delta$}
\put(2,2){\maru{16}}
\put(1,4){\vector(0,-1){4}}
\end{picture}
\end{center}

\begin{center}
\unitlength 10pt
\begin{picture}(34,2)
\Yboxdim{10pt}
\put(1,0){\yng(1,1)}
\put(1,1){\gnode}
\put(0.2,1){0}
\put(2.2,0){0}
\put(2.2,1){0}
\put(7,1){\yng(1)}
\put(7,1){\gnode}
\put(6.2,1){0}
\put(8.2,1){0}
\put(13,1){$\emptyset$}
\put(19,1){$\emptyset$}
\put(25,1){$\emptyset$}
\put(31,1){$\emptyset$}
\end{picture}
\end{center}

\begin{center}
\unitlength 10pt
\begin{picture}(3,4)
\put(-0.5,1.8){$\delta$}
\put(2,2){\maru{3}}
\put(1,4){\vector(0,-1){4}}
\end{picture}
\end{center}

\begin{center}
\unitlength 10pt
\begin{picture}(34,1)
\Yboxdim{10pt}
\put(1,0){\yng(1)}
\put(1,0){\gnode}
\put(0.2,0){0}
\put(2.2,0){0}
\put(7,0){$\emptyset$}
\put(13,0){$\emptyset$}
\put(19,0){$\emptyset$}
\put(25,0){$\emptyset$}
\put(31,0){$\emptyset$}
\end{picture}
\end{center}

\begin{center}
\unitlength 10pt
\begin{picture}(3,4)
\put(-0.5,1.8){$\delta$}
\put(2,2){\maru{2}}
\put(1,4){\vector(0,-1){4}}
\end{picture}
\end{center}

\begin{center}
\unitlength 10pt
\begin{picture}(34,1)
\Yboxdim{10pt}
\put(1,0){$\emptyset$}
\put(7,0){$\emptyset$}
\put(13,0){$\emptyset$}
\put(19,0){$\emptyset$}
\put(25,0){$\emptyset$}
\put(31,0){$\emptyset$}
\end{picture}
\end{center}

\begin{center}
\unitlength 10pt
\begin{picture}(3,4)
\put(-0.5,1.8){$\delta$}
\put(2,2){\maru{1}}
\put(1,4){\vector(0,-1){4}}
\end{picture}
\end{center}

\begin{center}
\unitlength 10pt
\begin{picture}(34,1)
\Yboxdim{10pt}
\put(1,0){$\emptyset$}
\put(7,0){$\emptyset$}
\put(13,0){$\emptyset$}
\put(19,0){$\emptyset$}
\put(25,0){$\emptyset$}
\put(31,0){$\emptyset$}
\end{picture}
\end{center}
\medskip

\noindent
Hence this rigged configuration corresponds to the path in Example \ref{ex:typical} by $\Phi$.
\end{example}

\subsection{Inverse algorithm $\tilde{\delta}$}
For a given rigged configuration $(\tilde{\nu},\tilde{J})$ and $b\in B$ the inverse algorithm
$\tilde{\delta}$ of $\delta$ is described as follows. From $b\in B$ go back the arrow in the crystal
graph $B_0$. Let the maximal length of the singular row
in $\nu^{(a)}$ be $\tilde{\ell}_0$. Repeat the following process for $j=1,2,\ldots$ until we arrive
at $\maru{1}$. Suppose the color of the arrow is $a$. Find the maximal integer $i\le\tilde{\ell}_{j-1}$
such that $\nu^{(a)}$ has a singular row of length $i$ and set $\tilde{\ell}_j=i$, reset $b$ to be
the source of the arrow. If there are two arrows ending at $b$, compare the maximal integers and take
the larger one. If the integers the same, either one can be taken. The output of the algorithm does 
not depend on the choices.

\section{Proof of Theorem \ref{thm:bij}}
\label{sec:proof}

Theorem \ref{thm:bij} is proved in this section. The following notation
is used. Let $(\nu,J)\in\RC(\la,L)$, $b=\gamma(\nu,J)\in B$,
$\rho=\la-\wt(b)$, and $(\nut,\Jt)=\delta(\nu,J)$. 
For $(\nu,J)\in\RC(\la,L)$, define
$\Delta(c(\nu,J))=c(\nu,J)-c(\delta(\nu,J))$.
The following lemma is essentially the same as \cite[Lemma 5.1]{OSS}.

\begin{lemma} \label{lem:stat reduction} 
To prove that
\eqref{eq:c=D} holds, it suffices to show that it holds for
$L=1$, and that for $L\ge2$ with $\Phi(\nu,J)=b_1\otimes
\dotsm\otimes b_L$, we have
\begin{equation}\label{eq:Deltacc}
  \Delta(c(\nu,J)) = -\alpha^{(1)}_1,
\end{equation}
and
\begin{equation}\label{eq:Delta2cc}
H(b_{L-1}\otimes b_L)=
  \tilde{\alpha}^{(1)}_1 - \alpha^{(1)}_1
\end{equation}
where $\alpha^{(1)}_1$ and $\tilde{\alpha}^{(1)}_1$ are the lengths of the
first columns in $\nu^{(1)}$ and $\nut^{(1)}$ respectively,
and $\delta(\nu,J)=(\nut,\Jt)$.
\end{lemma}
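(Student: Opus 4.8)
The plan is to reduce the statistic identity \eqref{eq:c=D} to the two local statements \eqref{eq:Deltacc} and \eqref{eq:Delta2cc} by an induction on $L$ that mirrors the recursive definition of $\Phi$. First I would treat the base case: for $L=1$ the path $\Phi(\nu,J)=b_1$ is a single tensor factor, $D(b_1)=0$ by \eqref{eq:energy}, so \eqref{eq:c=D} amounts to $c(\nu,J)=0$ for every $(\nu,J)\in\RC(\la,1)$; this is a direct check (the only admissible configurations are very small, since \eqref{eq:config} and \eqref{p and Q} with $L=1$ force $\nu$ to be one of a short explicit list, and on each one formula \eqref{rewrite c} together with $|J|$ gives $0$), and I would state it as the content of the $L=1$ verification that is assumed available for the main proof.

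Next, for the inductive step, write $\Phi(\nu,J)=b_1\otimes\dotsm\otimes b_L$ with $b=b_L=\gamma(\nu,J)$ and $(\nut,\Jt)=\delta(\nu,J)$, so that by the commutative diagram $\Phi(\nut,\Jt)=b_1\otimes\dotsm\otimes b_{L-1}$. By the inductive hypothesis $c(\nut,\Jt)=D(b_1\otimes\dotsm\otimes b_{L-1})$. Using the definition \eqref{eq:energy} of $D$, I would compute
\begin{equation*}
D(b_1\otimes\dotsm\otimes b_L)-D(b_1\otimes\dotsm\otimes b_{L-1})
=\sum_{j=1}^{L-1}H(b_j\otimes b_{j+1}),
\end{equation*}
and then I would invoke the standard fact (see \cite{NY,KSS,OSS}) that the right-hand sum telescopes under the crystal structure: more precisely, since $b_1\otimes\dotsm\otimes b_{L-1}$ is a classically restricted path and $b_L$ can be moved to the left through it by applying $f_i$'s with $i\in I_0$, and $H$ is $I_0$-invariant by \eqref{eq:loc en}, one has $\sum_{j=1}^{L-1}H(b_j\otimes b_{j+1})=H(b_{L-1}\otimes b_L)$ when $b_1\otimes\dotsm\otimes b_{L-1}\otimes b_L$ is highest weight — this reduces the energy difference to the single term $H(b_{L-1}\otimes b_L)$. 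Combining, $D(\Phi(\nu,J))-c(\nut,\Jt)=H(b_{L-1}\otimes b_L)$, while $c(\nu,J)-c(\nut,\Jt)=\Delta(c(\nu,J))$; so \eqref{eq:c=D} for $L$ follows from the inductive hypothesis precisely when $\Delta(c(\nu,J))=H(b_{L-1}\otimes b_L)$. It therefore suffices to produce a common quantity equal to both sides: \eqref{eq:Deltacc} expresses $\Delta(c(\nu,J))$ as $-\alpha^{(1)}_1$, and \eqref{eq:Delta2cc} expresses $H(b_{L-1}\otimes b_L)$ as $\tilde\alpha^{(1)}_1-\alpha^{(1)}_1$; but one must also notice that $\tilde\alpha^{(1)}_1$ is the first-column length of $\nut^{(1)}$, which by the recursion applied to $(\nut,\Jt)$ equals the corresponding quantity entering $\Delta(c(\nut,\Jt))$ at the next level, i.e. $-\tilde\alpha^{(1)}_1=\Delta(c(\nut,\Jt))=c(\nut,\Jt)-c(\delta(\nut,\Jt))=D(b_1\otimes\dotsm\otimes b_{L-1})-D(b_1\otimes\dotsm\otimes b_{L-2})=H(b_{L-2}\otimes b_{L-1})$ (for $L\ge3$). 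Thus \eqref{eq:Delta2cc} is exactly the bookkeeping needed to close the induction after \eqref{eq:Deltacc} is known, and no further input about the algorithm is used in this lemma.

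I expect the only real content here to be organizational: making the telescoping of $\sum H(b_j\otimes b_{j+1})$ to $H(b_{L-1}\otimes b_L)$ precise on highest-weight paths (this is where one uses that $B^{1,1}\otimes B^{1,1}$ is connected, that $H$ is defined up to the normalization \eqref{eq:H norm}, and the $I_0$-invariance \eqref{eq:loc en}), and checking that the indexing in \eqref{eq:Deltacc}–\eqref{eq:Delta2cc} is consistent with the recursion — in particular that the ``$\tilde\alpha^{(1)}_1$'' appearing in \eqref{eq:Delta2cc} at level $L$ is the same as the ``$\alpha^{(1)}_1$'' appearing in \eqref{eq:Deltacc} at level $L-1$. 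Since this lemma is quoted as being essentially \cite[Lemma 5.1]{OSS}, I would present the argument concisely, citing \cite{OSS} for the telescoping identity and only spelling out the two-line induction that converts \eqref{eq:Deltacc} and \eqref{eq:Delta2cc} into \eqref{eq:c=D}. The genuine work — verifying \eqref{eq:Deltacc} and \eqref{eq:Delta2cc} themselves from the explicit description of $\delta$ in \S\ref{subsec:algorithm}–\S\ref{subsec:vacancy} and the values of $H$ computed in \S\ref{subsec:1dsum} — is deferred to the subsequent sections and is not part of this lemma.
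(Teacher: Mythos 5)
Your overall skeleton (induction on $L$ via the recursion $\Phi(\nu,J)=\Phi(\delta(\nu,J))\otimes\gamma(\nu,J)$, together with the identity $D(b_1\otimes\cdots\otimes b_L)-D(b_1\otimes\cdots\otimes b_{L-1})=\sum_{j=1}^{L-1}H(b_j\otimes b_{j+1})$ coming from \eqref{eq:energy}) is the right one, and is what underlies \cite[Lemma 5.1]{OSS}, which the paper cites in place of a proof. But the step you use to finish --- the ``telescoping'' $\sum_{j=1}^{L-1}H(b_j\otimes b_{j+1})=H(b_{L-1}\otimes b_L)$ for classically restricted paths --- is false. It would force $H(b_j\otimes b_{j+1})=0$ for every $j\le L-2$ on every such path, whereas already $H(\maru{1}\otimes\maru{2})=-1$ (it is the highest weight vector of the component $B(\Lab_1+\Lab_2)$), so the path of Example \ref{ex:typical} violates it at the first step. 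Property \eqref{eq:loc en} only says $H$ is constant on each classical component of $B\otimes B$; it gives no relation among the distinct terms $H(b_j\otimes b_{j+1})$. Your closing attempt to repair matters, asserting $-\tilde{\alpha}^{(1)}_1=D(b_1\otimes\cdots\otimes b_{L-1})-D(b_1\otimes\cdots\otimes b_{L-2})=H(b_{L-2}\otimes b_{L-1})$, invokes the same false identity, and your ``common quantity'' comparison would in any case require $\tilde{\alpha}^{(1)}_1=0$ in order for $-\alpha^{(1)}_1$ to equal $\tilde{\alpha}^{(1)}_1-\alpha^{(1)}_1$.

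The correct way to use \eqref{eq:Delta2cc} is as a second, nested induction: apply it to $(\nu,J)$, then to $\delta(\nu,J)$, and so on down to the empty rigged configuration, whose first column has length $0$; summing the resulting increments gives $\sum_{j=1}^{L-1}H(b_j\otimes b_{j+1})=-\alpha^{(1)}_1$, the negative of the number of rows of $\nu^{(1)}$. (In Example \ref{ex:typical} the local energies $H(b_j\otimes b_{j+1})$ are $-1,-1,-1,0,-2$, whose partial sums $-1,-2,-3,-3,-5$ are exactly the negatives of the first-column lengths $1,2,3,3,5$ of $\nu^{(1)}$ along the successive rigged configurations.) Combining this with \eqref{eq:Deltacc} yields $c(\nu,J)-c(\nut,\Jt)=-\alpha^{(1)}_1=D(b_1\otimes\cdots\otimes b_L)-D(b_1\otimes\cdots\otimes b_{L-1})$, and the main induction closes with the hypothesis $c(\nut,\Jt)=D(b_1\otimes\cdots\otimes b_{L-1})$ and the base case $L=1$, which you treat correctly. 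So \eqref{eq:Deltacc} and \eqref{eq:Delta2cc} are not two expressions for one and the same quantity to be matched at a single level; \eqref{eq:Delta2cc} is the one-step increment whose accumulation over all applications of $\delta$ produces the $-\alpha^{(1)}_1$ appearing in \eqref{eq:Deltacc}.
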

There are five things that must be verified:
\begin{enumerate}
\item[(I)] $\rho$ is dominant.
\item[(II)] $(\nut,\Jt)\in\RC(\rho,L-1)$.
\item[(III)] $b$ can be appended to $(\nut,\Jt)$ to give $(\nu,J)$.
\item[(IV)] \eqref{eq:Deltacc} in Lemma \ref{lem:stat reduction} holds.
\item[(V)] \eqref{eq:Delta2cc} in Lemma \ref{lem:stat reduction} holds.
\end{enumerate}
Parts (I) and (II) show that $\delta$ is well-defined. Part (III) shows
$\delta$ has an inverse. Part (IV) and (V) suffice to prove that
$\Phi$ preserves statistics.

We need several preliminary lemmas on the convexity and
nonnegativity of the vacancy numbers $p^{(a)}_i$.

\begin{lemma}\label{lem:asym}
For large $i$, we have
\begin{equation*}
p_i^{(a)} = \la_a
\end{equation*}
where $\la_a$ is defined by $\la=\sum_{a\in I_0}\la_a\Lab_a$.
\end{lemma}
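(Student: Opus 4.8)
The plan is to compute $p_i^{(a)}$ asymptotically as $i\to\infty$ directly from the rewritten formula \eqref{p and Q}, namely $p^{(a)}_i=L\delta_{a1}-2Q^{(a)}_i+\sum_{b\sim a}Q^{(b)}_i$. The key observation is that for a fixed partition $\mu$, the quantity $Q_i(\mu)=\sum_j\min(\mu_j,i)$ stabilizes once $i$ exceeds the largest part of $\mu$: for such $i$ we have $Q_i(\mu)=|\mu|$, the total size of $\mu$. Since $\nu=(m^{(a)}_i)$ is an admissible configuration, only finitely many parts are nonzero, so there is an $N$ with $Q^{(a)}_i=|\nu^{(a)}|$ for all $a\in I_0$ and all $i\ge N$. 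Thus for large $i$,
\[
p^{(a)}_i = L\delta_{a1} - 2|\nu^{(a)}| + \sum_{b\sim a}|\nu^{(b)}|.
\]

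Next I would identify the right-hand side with $\la_a$ using the configuration constraint \eqref{eq:config}. Under the restriction \eqref{L^(a)_i}, equation \eqref{eq:config} reads $\sum_{a\in I_0}\sum_{i}i\,m^{(a)}_i\alpha_a = L\Lab_1-\la$, i.e. $\sum_{a\in I_0}|\nu^{(a)}|\,\alpha_a = L\Lab_1 - \la$ since $|\nu^{(a)}|=\sum_i i\,m^{(a)}_i$. Pairing both sides with the simple coroot $\alpha^\vee_a$ and using $\langle\Lab_1,\alpha^\vee_a\rangle=\delta_{a1}$, $\langle\la,\alpha^\vee_a\rangle=\la_a$, and $\langle\alpha_b,\alpha^\vee_a\rangle=C_{ab}$ (which is $2$ if $b=a$, $-1$ if $b\sim a$, and $0$ otherwise, since the Cartan matrix is symmetric in the simply-laced case), one gets exactly
\[
2|\nu^{(a)}| - \sum_{b\sim a}|\nu^{(b)}| = L\delta_{a1} - \la_a,
\]
which rearranges to $p^{(a)}_i=\la_a$ for large $i$, as claimed.

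I expect no real obstacle here; the only points requiring a little care are (a) checking that the formula \eqref{p and Q} is the correct specialization of \eqref{eq:p} — but this is already asserted in the excerpt — and (b) matching the normalization of the inner product with the coroot pairing, i.e. that $(\alpha_a|\alpha_b)=\langle\alpha_b,\alpha^\vee_a\rangle=C_{ab}$ under the standard normalization for simply-laced type where all roots have squared length $2$. Once these bookkeeping conventions are pinned down, the proof is a two-line computation: stabilization of $Q^{(a)}_i$ followed by pairing the configuration identity \eqref{eq:config} with $\alpha^\vee_a$.
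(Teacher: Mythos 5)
Your proof is correct and follows essentially the same route as the paper, whose proof is simply the remark that the claim ``follows from the formula for the vacancy number \eqref{eq:p} and the constraint \eqref{eq:config}''; you have merely written out the two steps (stabilization of $Q^{(a)}_i$ to $|\nu^{(a)}|$ for large $i$, then pairing \eqref{eq:config} with $\alpha_a^\vee$) explicitly, using the specialized form \eqref{p and Q} in place of \eqref{eq:p}. The bookkeeping you flag is handled correctly: in the simply-laced normalization $(\alpha_a|\alpha_b)=C_{ab}$ and $(\Lab_b|\alpha_a)=\delta_{ab}$, so the computation closes as you state.
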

\begin{proof}
This follows from the formula for the vacancy number \eqref{eq:p} and 
the constraint \eqref{eq:config}.
\end{proof}

Direct calculations show that
\begin{equation}\label{eq:Pm D}
-p_{i-1}^{(a)}+2p_i^{(a)}-p_{i+1}^{(a)}
=L\delta_{a1}\delta_{i1}-2m_i^{(a)}+\sum_{b\sim a}m_i^{(b)}.
\end{equation}
In particular these equations imply the convexity condition
\begin{equation}\label{eq:convex}
p_i^{(a)}\ge \frac12(p_{i-1}^{(a)}+p_{i+1}^{(a)})
\qquad \text{if $m_i^{(a)}=0$.}
\end{equation}

\begin{lemma}\label{lem:equiv}
Let $\nu$ be a configuration. The following
are equivalent:
\begin{enumerate}
\item $p_i^{(a)}\ge 0$ for all $i\in\Z_{>0}$, $a\in I_0$;
\item $p_i^{(a)}\ge 0$ for all $i\in\Z_{>0}$, $a\in I_0$ such that
$m_i^{(a)}>0$.
\end{enumerate}
\end{lemma}
\begin{proof}
This follows immediately from Lemma \ref{lem:asym} and the convexity
condition \eqref{eq:convex}.
\end{proof}

\begin{proof}[Proof of (I)]
Here we show $\rho=\la-\wt(b)$ is dominant. Suppose not. Let $\la=\sum_{i\in I_0}
\la_i\Lab_i$. Since $\veps_i(b),\vphi_i(b)\le1$ for any $i\in I_0$ and $b\in B$, in order to
make $\rho$ not dominant there exists $a\in I_0$ such that $\la_a=0$ and $\vphi_a(b)=1$. (There may be at
most two such $a$, but the proof is uniform.)
Let $R$ be the route taken by the algorithm $\delta$. Although the arrow of color $a$ sourcing
from $b$ is not taken by $\delta$, we include it into $R$ as a terminal arrow from notational
reason. Let $(a_1,\ldots,a_l)$ be colors of arrows in $R$. Let $v_j$ be the source of the 
arrow of color $a_j$. Then $a_l=a,v_l=b$. Let $\ell_j$ be the length of the singular row in
$\nu^{(a_j)}$ whose node is removed by $\delta$. 

Let $\ell$ be the largest part in $\nu^{(a)}$. We first show $\ell>0$. Suppose
$\ell=0$. Then from \eqref{p and Q} and Lemma \ref{lem:asym} one gets
\begin{equation} \label{pr1:eq1}
0=L\delta_{a1}+\sum_{c\sim a}Q^{(c)}_i\qquad\text{for large }i.
\end{equation}
However, this is a contradiction since along the route $R$ there has
to be some $c$ such that $c\sim a$ and a node in $\nu^{(c)}$ was removed. There is
only one exception: $b=\maru{1}$ and $a=1$ case. This is also contradictory since 
the first term of the r.h.s. of \eqref{pr1:eq1} is positive. We can conclude $\ell>0$.

The convexity condition \eqref{eq:convex} implies $p^{(a)}_i=0$ for all $i\ge\ell$.
Equation \eqref{eq:Pm D} in turn yields $m^{(c)}_i=0$ for all $i>\ell$ and $c\sim a$.
Set $k=\max\{1\le j<l\mid a_j\sim a\}$. 
Then from Lemma \ref{lem:graph} (4) there is an arrow of color $a$ sourcing from $v_j$ for
any $k<j<l$, though by definition of $a_k$ and $a_l$, all these arrows are not chosen by $\delta$. 
In view of the fact that $m_i^{(a_k)}=0$ for all $i>\ell$ and $\nu^{(a)}$ has a 
singular row of length $\ell$, one concludes that all length $\ell$ rows of $\nu^{(a)}$ had been
removed before $a_k$. Thus we obtain
\begin{equation} \label{pr1:eq2}
\sharp\{1\le j<l\mid a_j=a\text{ and }\ell_j=\ell\}=m_\ell^{(a)}.
\end{equation}

Set $i=\ell$ in \eqref{eq:Pm D}. It yields
\begin{equation} \label{pr1:eq3}
-p^{(a)}_{\ell-1}=L\delta_{a1}\delta_{\ell1}-2m^{(a)}_\ell+\sum_{c\sim a}m^{(c)}_\ell.
\end{equation}
\eqref{pr1:eq2}, Lemma \ref{lem:graph} (1) imply $\sum_{c\sim a}m^{(c)}_\ell\ge2m^{(a)}_\ell$,
thus from \eqref{pr1:eq3} we deduce $p^{(a)}_{\ell-1}=0$ and
$\sum_{c\sim a}m^{(c)}_\ell=2m^{(a)}_\ell$. Let 
$l_1=\min\{1\le j<l\mid a_j=a\text{ and }\ell_j=\ell\}$. Then the latter condition combined with
Lemma \ref{lem:graph} (1) and \eqref{pr1:eq2} imply that a node in each row of length $\ell$ 
in $\nu^{(c)}$ ($c\sim a$) should be entirely removed during the process of the algorithm between 
$j=l_1$ and $j=l$. Therefore length $\ell$ rows of $\nu^{(c)}$ ($c\sim a$) are not removed between
$j=1$ and $j=l_1-1$, which implies that $\ell_j<\ell$ for all 
$j\le\max\{1\le j<l_1\mid a_j\sim a\}$. If $m^{(a)}_{\ell-1}>0$,
a node in all these rows should have been removed at the stage of $j=l_1$ during the algorithm 
since these rows are singular and after $j=l_1$ only length $\ell$ rows are removed. Hence 
\begin{equation} \label{pr1:eq4}
\sharp\{1\le j<l\mid a_j=a\text{ and }\ell_j=\ell-1\}=m^{(a)}_{\ell-1}.
\end{equation}
This equality is valid also when $m^{(a)}_{\ell-1}=0$.

Set $i=\ell-1$ in \eqref{eq:Pm D}. It yields
\begin{equation} \label{pr1:eq5}
-p^{(a)}_{\ell-2}=L\delta_{a1}\delta_{\ell-1,1}-2m^{(a)}_{\ell-1}+\sum_{c\sim a}m^{(c)}_{\ell-1}.
\end{equation}
\eqref{pr1:eq4}, Lemma \ref{lem:graph} (1) and \eqref{pr1:eq5} imply $p^{(a)}_{\ell-2}=0$ and
$\sum_{c\sim a}m^{(c)}_{\ell-1}=2m^{(a)}_{\ell-1}$. The latter condition implies $\ell_j<\ell-1$ for all
$j\le\max\{1\le j<l_2\mid a_j\sim a\}$ where $l_2=\min\{1\le j<\ell\mid a_j=a\text{ and }\ell_j\ge\ell-1\}$,
since from Lemma \ref{lem:graph} (1) a node in all the rows of length $\ell-1$ in $\nu^{(c)}$ ($c\sim a$)
should be removed between $j=l_2$ and $j=l_1$. We continue this procedure until $j=1$, where
\begin{align} 
&\sharp\{1\le j<l\mid a_j=a\text{ and }\ell_j=1\}=m^{(a)}_1, \label{pr1:eq6} \\
&-p^{(a)}_0=0=L\delta_{a1}-2m^{(a)}_1+\sum_{c\sim a}m^{(c)}_1. \label{pr1:eq7}
\end{align}
are established.

From \eqref{pr1:eq6}, Lemma \ref{lem:graph} (1) we have $\sum_{c\sim a}m^{(c)}_1\ge2m^{(a)}_1$.
It contradicts to \eqref{pr1:eq7} when $a=1$. If $a\neq1$, we have $\sum_{c\sim a}m^{(c)}_1=2m^{(a)}_1$.
This equation implies that a node in all the rows of length 1 in $\nu^{(c)}$ ($c\sim a$) should be 
removed during the process $j\ge\min\{1\le j<l\mid a_j=a\}$. However, it is a contradiction, since 
there exists a $j$ such that $a_j\sim a$ and $j<\min\{1\le j<l\mid a_j=a\}$ by Lemma \ref{lem:graph} (2).
The proof is completed.
\end{proof}

\begin{proof}[Proof of (II)]
To prove the admissibility of $(\nut,\Jt)$ we need to show 
\begin{equation} \label{pr2:eq1}
0\leq\Jt^{(a,i)}_{\max}\leq \tilde{p}_i^{(a)}
\end{equation}
for all $i\ge1,1\le a\le6$ where $\Jt^{(a,i)}_{\max}$ stands for the largest part of
$\Jt^{(a,i)}$. In view of the definition of the algorithm $\delta$ in \S\ref{subsec:algorithm}
and the tables of $\tilde{p}^{(a)}_i-p^{(a)}_i$ in \S\ref{subsec:vacancy}, the condition \eqref{pr2:eq1}
could only be violated when the following cases occur.
\begin{itemize}
\item[(i)] There exists a singular row of length $i$ in $\nu^{(a)}$ such that $\ell_j\le i<\ell_{j'}$
	for some $j<j'$.
\item[(ii)] $m^{(a)}_{\ell_{j'}-1}=0,p^{(a)}_{\ell_{j'}-1}=0,\ell_j<\ell_{j'}$ for some $j<j'$.
\end{itemize}
In both cases $\ell_{j'}$ corresponds to $\nu^{(a)}$ and $\ell_j$ to $\nu^{(c)}$ such that $c\sim a$
and $j$ is the maximum that is less than $j'$.

We show (i) and (ii) cannot occur. Firstly, suppose (i) occurs. Then, by Lemma \ref{lem:graph} (4),
a node of this singular row of length $i$ should have been removed by $\delta$, which is a contradiction.
Suppose (ii) occurs. Let $t$ be a maximal integer such that $t<\ell_{j'},m^{(a)}_t>0$; if no such $t$
exists set $t=0$. By \eqref{eq:convex} $p^{(a)}_{\ell_{j'}-1}=0$ is only possible if $p^{(a)}_i=0$ for
all $t\le i\le\ell_{j'}$. By \eqref{eq:Pm D} one finds that $m^{(c)}_i=0$ for all $c\sim a, t<i<\ell_{j'}$.
Since $\ell_j<\ell_{j'}$ this implies that $\ell_j\le t$. If $t=0$, it contradicts $\ell_j\ge1$. Hence
assume that $t>0$. Since $p^{(a)}_t=0$ and $m^{(a)}_t>0$, there is a singular row of length $t$ in 
$\nu^{(a)}$ and therefore $\ell_{j'}=t$ by Lemma \ref{lem:graph} (4), which contradicts $t<\ell_{j'}$.
\end{proof}

\begin{proof}[Proof of (III)]
Given $(\nut,\Jt)\in\Path(\rho,L-1)$ and $b\in B$, we want to show that one obtains the original 
$(\nu,J)\in\Path(\la,L)$ by the inverse procedure of $\delta$. However, once one notices from the
tables in \S\ref{subsec:vacancy} that if a node is removed from a row of length $\ell$ in $\nu^{(a)}$,
then the difference $\tilde{p}^{(a)}_i-p^{(a)}_i=+1$ for all $\ell\le i<\ell'$ where $\ell'$ is the
length of the singular row in $\nu^{(c)}$ such that $c\sim a$ removed by $\delta$ after $\ell$, it is 
obvious that $\tilde{\delta}$ gives the inverse procedure of $\delta$.
\end{proof}

\begin{proof}[Proof of (IV)]
Let $(\nut,\Jt)=\delta(\nu,J)$. Let $\tilde{m}^{(a)}_i,\tilde{p}^{(a)}_i$ be for $(\nut,\Jt)$. Let
$\ell^{(a)}_k$ ($1\le k\le k_a$) be the length of the row a node of which is removed at the $k$-th
time from $\nu^{(a)}$ by the algorithm $\delta$. Then by \eqref{eq:c},\eqref{L^(a)_i},\eqref{c-nu-J}
we have
\begin{align} \label{pr4:eq1}
\Delta(c(\nu,J))=&\frac12\sum_{a,b}\sum_{j,k}C_{ab}\min(j,k)
(m^{(a)}_jm^{(b)}_k-\tilde{m}^{(a)}_j\tilde{m}^{(b)}_k)\\
&+\sum_j(Lm^{(1)}_j-(L-1)\tilde{m}^{(1)}_j)+\sum_a\sum_{k=1}^{k_a}
(p^{(a)}_{\ell^{(a)}_k}-\tilde{p}^{(a)}_{\ell^{(a)}_k-1}). \nonumber
\end{align}
From \eqref{eq:p} we obtain
\begin{align*}
p^{(a)}_{\ell^{(a)}_k}-\tilde{p}^{(a)}_{\ell^{(a)}_k-1}
&=\delta_{a1}(1+(L-1)\delta_{\ell^{(a)}_k,1})\\
&-\sum_{b,j}C_{ab}\Bigl(\chi(j\ge\ell^{(a)}_k)m^{(b)}_j+\min(\ell^{(a)}_k-1,j)
\sum_{i=1}^{k_b}(\delta_{j,\ell^{(b)}_i}-\delta_{j,\ell^{(b)}_i-1})\Bigr).
\end{align*}
Substituting \eqref{eq:ch m D1} and the above into \eqref{pr4:eq1} one gets
\[
\Delta(c(\nu,J))=k_1-\sum_jm^{(1)}_j-V,
\]
where
\[
V=\frac12\sum_{a,b}\sum_{i=1}^{k_a}\sum_{j=1}^{k_b}C_{ab}(\delta_{\ell^{(a)}_i\ell^{(b)}_j}
+\chi(\ell^{(a)}_i<\ell^{(b)}_j)).
\]
Use another notation for $\ell^{(a)}_i$. Namely, let $\ell_j$ ($j=1,\ldots,\ell$) be the successive
length of the singular rows by $\delta$. $V$ is calculated as
\begin{align*}
V&=\frac12\sum_{i,j=1}^\ell C_{a_ia_j}(\delta_{\ell_i\ell_j}+2\chi(\ell_i<\ell_j))\\
&=\ell+\sum_{i<j}C_{a_ia_j}\\
&=k_1.
\end{align*}
Here we have used Lemma \ref{lem:graph} (2) in the last equality. This completes the proof.
\end{proof}

\begin{proof}[Proof of (V)]
The proof is reduced to showing the following lemma.
\end{proof}

\begin{lemma}
For $(\nu,J)\in\RC(\la,L)$ with $L\ge2$ set $\gamma(\nu,J)=c,\gamma(\delta(\nu,J))=b$. Let 
$\ell^{(a)}_k$ be the length of the singular row in $\nu^{(a)}$ at the $k$-th time by the algorithm
$\delta$. Define the following subsets of $B\otimes B$.
\begin{align*}
S_1=&\{\maru{1}\otimes\maru{j}\mid j\ge18\}\sqcup\{\maru{2}\otimes\maru{j}\mid j\ge23\}\sqcup
\{\maru{3}\otimes\maru{j}\mid j\ge25\}\\
&\sqcup\{\maru{4}\otimes\maru{j},\maru{7}\otimes\maru{j}\mid j\ge26\}\sqcup
\{\maru{i}\otimes\maru{27}\mid i=5,8,10,13,18\},\\
S_2=&\{\maru{i}\otimes\maru{j}\mid \maru{i}\text{ can be reached by following some (possibly zero) 
arrows from }\maru{j}\}.
\end{align*}
Then we have
\begin{enumerate}
\item $H(b\otimes c)=\left\{
\begin{array}{ll}
-2& \text{if }b\otimes c\in S_1\\
0& \text{if }b\otimes c\in S_2\\
-1& \text{otherwise}.
\end{array}\right.$
\item $b\otimes c$ belongs to $S_1$ if and only if $\ell^{(1)}_1=\ell^{(1)}_2=1$.
\item $b\otimes c$ belongs to $S_2$ if and only if $\ell^{(1)}_1>1$.
\end{enumerate}
\end{lemma}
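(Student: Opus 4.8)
The plan is to prove the three assertions in turn: (1) reduces to a finite check on $B_0\otimes B_0$, while (2)--(3) form the combinatorial heart. For (1), recall from \eqref{eq:loc en} that $H$ is constant on each connected component of $B_0\otimes B_0$, and from \S\ref{subsec:1dsum} that there are exactly three components, with highest weight vectors $\maru{1}\otimes\maru{1}$, $\maru{1}\otimes\maru{2}$, $\maru{1}\otimes\maru{18}$ and respective values $H=0,-1,-2$. So it suffices to identify $S_2$ with the component of $\maru{1}\otimes\maru{1}$ and $S_1$ with that of $\maru{1}\otimes\maru{18}$. Both sets visibly contain the corresponding highest weight vector; using the tensor product rule for $e_a,f_a$ and the fact that $B_0\cong B(\Lab_1)$ is minuscule (so $\veps_a(v)+\vphi_a(v)\le1$ for every $v$ and $a\in I_0$), one checks that $S_1$ and $S_2$ are each stable under all $e_a,f_a$ ($a\in I_0$), hence are unions of components; since $\maru{1}$ is reachable from neither $\maru{2}$ nor $\maru{18}$, $S_2$ omits the other two highest weight vectors, and from its explicit list $S_1$ omits $\maru{1}\otimes\maru{1}$ and $\maru{1}\otimes\maru{2}$. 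Thus each of $S_1$, $S_2$ is a single component, which proves (1). (One could equally just list the three components directly from Figure \ref{fig:crystal}.)

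For (2)--(3), first observe from Figure \ref{fig:crystal} that a route starting at $\maru{1}$ begins with the color $1$ arrow to $\maru{2}$ and can use a color $1$ arrow a second time only among $\maru{13}\to\maru{18},\maru{14}\to\maru{19},\dots,\maru{17}\to\maru{22}$, past which no color $1$ arrow is available; hence color $1$ occurs at most twice among the arrows traversed by $\delta$, and as soon as a second one is used we have $c=\gamma(\nu,J)\in\{\maru{18},\dots,\maru{27}\}$. Since $\ell^{(1)}_1\le\ell^{(1)}_2$, exactly one of three cases holds: (i) $\ell^{(1)}_1>1$ (including $k_1=0$, where $c=\maru{1}$); (ii) $\ell^{(1)}_1=\ell^{(1)}_2=1$; (iii) $\ell^{(1)}_1=1$ with $\ell^{(1)}_2>1$ or $k_1=1$. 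By \eqref{eq:ch m D1} one has $\tilde{\alpha}^{(1)}_1-\alpha^{(1)}_1=-\sharp\{k\mid\ell^{(1)}_k=1\}$, which equals $0,-2,-1$ in these cases; so, granting (1), assertions (2)--(3) are precisely what is needed to deduce (V) via $H(b_{L-1}\otimes b_L)=H(b\otimes c)$, and conversely the three cases are matched to $S_2$, $S_1$ and ``otherwise'' by (1).

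The proof of (2)--(3) then amounts to running $\delta$ on $(\nu,J)$ and on $(\nut,\Jt)=\delta(\nu,J)$ simultaneously and coupling the route $R$ of the first (ending at $c$) with the route $R'$ of the second (ending at $b$). The mechanism is that when $\delta$ removes a box from a length-$i$ row of $\nu^{(a)}$ it leaves a singular row of length $i-1$ in $\nut^{(a)}$, while by the tables of \S\ref{subsec:vacancy} the differences $\tilde p^{(a)}_j-p^{(a)}_j$ equal $+1$ exactly on the stretches running from a removed row to the next removal of an adjacent color. Combining this with the convexity and nonnegativity of the vacancy numbers (\eqref{eq:convex}, \eqref{eq:Pm D} and Lemmas \ref{lem:asym}, \ref{lem:equiv}) and with Lemma \ref{lem:graph}, one shows that $R'$ shadows $R$ while taking weakly shorter rows, and then reads off the positions of $b$ and $c$ in Figure \ref{fig:crystal}: in case (i), $R'$ reaches a $b$ that is reachable from $c$, so $b\otimes c\in S_2$; in case (ii), the two length-$1$ color $1$ arrows of $R$ place $b\otimes c$ in the list defining $S_1$; in case (iii), $b\otimes c$ falls in neither.

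The step I expect to be the main obstacle is controlling the passage from $\nu$ to $\nut$ when executing the second copy of $\delta$: deleting boxes can make a previously nonsingular row singular (precisely when its rigging equals the old vacancy number minus one), and conversely, so the rows available to $R'$ are not merely the old ones with some boxes removed. Pinning down exactly which rows of $\nut^{(a)}$ are singular near the lengths $\ell^{(a)}_k$ --- hence what $R'$ does at each branch vertex of Figure \ref{fig:crystal} --- is where \eqref{eq:convex}, \eqref{eq:Pm D} and the local features of Lemma \ref{lem:graph} must be combined most carefully; this is the $E_6^{(1)}$ counterpart of the corresponding steps in \cite{OSS,KSS}.
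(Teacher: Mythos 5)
Your treatment of part (1) matches the paper's: both reduce it to a finite verification on $B_0\otimes B_0$ (the paper simply cites a computer check; your decomposition into the three classical components is a reasonable way to organize that check). The connection you draw between (2)--(3) and step (V) via $\tilde{\alpha}^{(1)}_1-\alpha^{(1)}_1=-\sharp\{k\mid \ell^{(1)}_k=1\}$ is also correct. The gap is in (2) and (3) themselves: what you give is a description of the intended mechanism, not an argument. You yourself flag the decisive step --- determining which rows of $\nut^{(a)}$ are singular near the lengths $\ell^{(a)}_k$, hence where the second route $R'$ actually goes --- as the expected obstacle, and that step is exactly the content of the lemma. The paper carries it out by proving that $\ell^{(1)}_1=\ell^{(1)}_2=1$ is equivalent to the explicit chain of inequalities \eqref{pr:lemH} coupling the two runs of $\delta$ (namely $\ell^{(2)}_3\le\tilde{\ell}^{(1)}_1$, $\ell^{(3)}_4\le\tilde{\ell}^{(2)}_1$, \dots, $\tilde{\ell}^{(5)}_1=\infty$), and then matching that chain against the explicit list $S_1$; and for (3) by inductively producing pairs $(a',k')$ with $\ell^{(a)}_k\le\ell^{(a')}_{k'}$ and $\tilde{\ell}^{(a)}_k\ge\ell^{(a')}_{k'}$, terminating in a contradiction once $\ell^{(a')}_{k'}=\infty$. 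Nothing in your proposal substitutes for these chains.

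Moreover, the one concrete guiding claim you do make --- that ``$R'$ shadows $R$ while taking weakly shorter rows'' --- is inconsistent with cases (ii) and (iii). It is the right picture only when $\ell^{(1)}_1>1$: there each removal from a row of length $\ell_j\ge2$ leaves a singular row of length $\ell_j-1$ in $(\nut,\Jt)$, so $R'$ can follow $R$ and continue, giving $b\otimes c\in S_2$. But when $\ell^{(1)}_1=1$ the relevant entries of the tables in \S\ref{subsec:vacancy} are $+1$ on the intervals $[\ell^{(1)}_1,\ell^{(2)}_1)$, $[\ell^{(2)}_1,\ell^{(3)}_1)$, and so on, so previously singular rows in those ranges cease to be singular and the second $\delta$ is forced to select weakly \emph{longer} rows, which is precisely why $R'$ stalls and $b$ lands at the positions listed in $S_1$ (or strictly in between, in case (iii)). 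Under your stated heuristic $R'$ would always reach at least as far as $R$, which would wrongly place case (ii) inside $S_2$. The proposal therefore needs the quantitative coupling inequalities, with the correct direction in each case, before (2) and (3) can be considered proved.
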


\begin{proof}
Checking (1) reduces to a finite calculation that can be confirmed by computer.

To prove (2) let $\tilde{\ell}^{(a)}_k$ be the length of the row in $\nu^{(a)}$ at the $k$-th time
by the second $\delta$. We first show the condition $\ell^{(1)}_1=\ell^{(1)}_2=1$ is equivalent to 
\begin{equation} \label{pr:lemH}
\ell^{(2)}_3\le\tilde{\ell}^{(1)}_1,\ell^{(3)}_4\le\tilde{\ell}^{(2)}_1,
\ell^{(4)}_3\le\tilde{\ell}^{(3)}_1,\ell^{(5)}_2\le\tilde{\ell}^{(4)}_1,
\tilde{\ell}^{(5)}_1=\infty.
\end{equation}
Let $R$ and $\tilde{R}$ be the routes taken by the first and second algorithms $\delta$. Suppose
$\ell^{(1)}_1=\ell^{(1)}_2=1$. Then for all the arrows in $R$ between the first one of color 1 
and the second, the first $\delta$ removes a node from a row of length 1, namely, removes the row.
In view of the table for $a=1$ in \S\ref{subsec:vacancy} the length of the singular row after the
first $\delta$ should be no less than $\ell^{(2)}_3$. Hence we have $\ell^{(2)}_3\le
\tilde{\ell}^{(1)}_1$. For the next inequality view the table for $a=2$. Since $\ell^{(2)}_3\le
\tilde{\ell}^{(2)}_1$, we get $\ell^{(3)}_4\le\tilde{\ell}^{(2)}_1$. Proceeding similarly we obtain
\eqref{pr:lemH}. Suppose \eqref{pr:lemH} next and assume $\ell^{(1)}_2>1$. Then after the first 
$\delta$ there exists a singular row in $\nu^{(1)}$ of length less than $\ell^{(1)}_2$, which means
$\tilde{\ell}^{(1)}_1<\ell^{(1)}_2$. However, it contradicts to the first inequality of 
\eqref{pr:lemH}. Therefore, we have $\ell^{(1)}_1=\ell^{(1)}_2=1$. The fact that \eqref{pr:lemH} is
equivalent to $b\ot c\in S_1$ is checked as follows. Suppose for instance that $b=\maru{2}$.
This means $\tilde{\ell}^{(1)}_1<\infty$ and $\tilde{\ell}^{(2)}_1=\infty$. From the first inequality
of \eqref{pr:lemH} we have $\ell^{(2)}_3<\infty$, which implies $c=\maru{j}$ for $j\ge23$. Other
cases can be checked similarly.

We are left to show (3). From the assumption $\ell^{(1)}_1>1$, there are remaining singular rows
after the first $\delta$ which could be removed by the second $\delta$. Thus the ``if" part is 
finished. To show the ``only if" part, we assume $\ell^{(1)}_1=1$ and deduce a contradiction. 
From the table at \S\ref{subsec:vacancy}, $a=1$, the value for $[\ell^{(1)}_1,\ell^{(2)}_1)$ is
$+1$ while we have $\ell^{(1)}_1=1$. Thus we have $\tilde{\ell}^{(1)}_1\ge\ell^{(2)}_1$. In view
of the table at \S\ref{subsec:vacancy}, at $a=2$, the value for $[\ell^{(2)}_1,\ell^{(3)}_1)$ is
$+1$. Thus we find $\tilde{\ell}^{(2)}_1\ge\ell^{(3)}_1$. We can continue this procedure
as follows. For $\ell^{(a)}_k<\infty$ one can definitely find $\tilde{\ell}^{(a)}_k<\infty$ by the
assumption $b\otimes c\in S_2$. Imitating the way to show $\tilde{\ell}^{(1)}_1\ge\ell^{(2)}_1$
and $\tilde{\ell}^{(2)}_1\ge\ell^{(3)}_1$, we can then find a pair $(a',k')$ such that 
$\ell^{(a)}_k\le\ell^{(a')}_{k'}$ and $\tilde{\ell}^{(a)}_k\ge\ell^{(a')}_{k'}$.
This procedure continues until we arrive at $\ell^{(a')}_{k'}=\infty$. However, the previous 
$\tilde{\ell}^{(a)}_k$ should be finite since the second $\delta$ can go further along the route
taken by the first $\delta$. This contradicts to $\tilde{\ell}^{(a)}_k\ge\ell^{(a')}_{k'}=\infty$.
The proof is finished.
\end{proof}

\subsection*{Acknowledgements}
M.O. thanks Katsuyuki Naoi for stimulating discussions. M.O. is partially supported by the Grants-in-Aid 
for Scientific Research No. 20540016 from JSPS.

\end{document}